\documentclass[12pt]{article}
\textwidth= 6.25in
\textheight= 9.0in
\topmargin = -10pt
\evensidemargin=10pt
\oddsidemargin=10pt
\headsep=15pt
\parskip=10pt

\usepackage{amsthm,amsmath,amssymb}
\usepackage{graphics}

\newtheorem{la}{Lemma}[section]
\newtheorem{pro}[la]{Proposition}
\newtheorem{thm}[la]{Theorem}
\newtheorem{cor}[la]{Corollary}

\newtheorem{example}{Example}

\newcommand{\bex}{\begin{example}}
\newcommand{\eex}{\end{example}}


\date{}
%
\title{\bf Beyond Hamiltonicity of Prime Difference Graphs}


\author{Hong-Bin Chen\thanks{Department of Applied Mathematics, National Chung Hsing University, Taichung 40249, Taiwan
{\tt (Email: andanchen@gmail.com)} Supported by MOST 107-2115-M-035-003-MY2}
\and
Hung-Lin Fu\thanks{Department of Applied Mathematics, National Chiao Tung University, Hsinchu 30050, Taiwan
{\tt (Email: hlfu@math.nctu.edu.tw)} Research supported by MOST 108-2115-M-009-010}
\and
Jun-Yi Guo\thanks{Department of Mathematics, National Taiwan Normal University, Taipei 11677, Taiwan
{\tt (Email: davidguo@ntnu.edu.tw)} Research supported by MOST 108-2115-M-003-014}
}

\begin{document}

\maketitle

\begin{abstract}
A graph is Hamiltonian if it contains a cycle which visits every vertex of the graph
exactly once. In this paper, we consider the problem of Hamiltonicity of a graph $G_n$, which will be called the prime difference graph of order $n$, with vertex set $\{1,2,\cdots, n\}$ and edge set $\{uv: |u-v|$ is a prime number$\}$.
A recent result, conjectured by Sun and later proved by Chen, asserts that $G_n$ is Hamiltonian for $n\geq 5$. This paper extends their result in three directions. First, we prove that for any two integers $a$ and $b$ with $1\leq a<b\leq n$, there is a Hamilton path in $G_n$ from $a$ to $b$ except some cases of small $n$. This result implies robustness of the Hamiltonicity property of the prime difference graph in a sense that for any edge $e$ in $G_n$ there exists a Hamilton cycle containing $e$. Second, we show that the prime difference graph contains considerably more about the cycle structure than Hamiltonicity; precisely, for any integer $n\geq 7$, the prime difference graph $G_n$ contains any 2-factor of the complete graph of order $n$ as a subgraph. Finally, we find that $G_n$ may contain more edge-disjoint Hamilton cycles. In particular, these Hamilton cycles are generated by two prime differences.\\

 \noindent \textbf{Keywords:} Hamilton cycle; Prime difference graph; Prime sum graph

 \noindent \textbf{Mathematics Subject Classifications:} 05C45
\end{abstract}

\section{Introduction}

A {\it Hamilton path} ({\it cycle}) in a graph $G$ is a path (cycle) visiting each vertex of $G$ exactly once.  A graph is {\it Hamiltonian}
if it contains a Hamilton cycle. The notion of Hamiltonicity is a very important and extensively studied concept in graph theory. Determining whether such paths and cycles exist in graphs is an extremely difficult problem, which is NP-complete. Many efforts have been devoted to obtain sufficient conditions for Hamiltonicity. One of the oldest results is the theorem of Dirac \cite{dirac}, who showed that if the minimal degree of graph $G$ on $n$ vertices is at least $n/2$, then $G$ contains a Hamilton cycle. This result is only one example of a vast majority of known sufficient conditions for Hamiltonicity that mainly deal with graphs which are fairly dense. It appears that little is known about Hamiltonicity in relatively sparse graphs. Recently, there has been an increasing interest in studying the {\it robustness of Hamiltonicity}, which is aiming to measure ``how strongly a graph possesses the property of being Hamiltonian?''. Several different measures of the robustness of Hamiltonicity have been concerned; for example, a graph has many Hamilton cycles or many edge-disjoint Hamilton cycles, a graph has a Hamilton cycle passing through any designated edge, and how many edges one must delete to destroy its Hamiltonicity. See the surveys \cite{gould,sudakov} for more closely related topics.

In 1982, Filz \cite{filz} first introduced the notion ``prime circle'' of positive integers, which makes an unusual connection between  numbers and Hamilton cycles. A {\it prime circle of order $2n$} is a cyclic permutation of the integers 1 through $2n$ such that the sum of any two adjacent integers is a prime number. For instance, $\langle1,4,3,2,5,6\rangle$ is a prime circle of order 6. Filz asked the question that ``Are there prime circles of order $2n$ for any positive integer $n$?''. It is a problem concerning numbers essentially and can be found also in the famous book entitled ``Unsolved Open Problems in Number Theory'' by Richard Guy \cite{guy}. However, this question has been attracted little attention and no progress has been made until a recent paper by Chen, Fu, and Guo \cite{cfg}. They studied the problem by defining a graph, called the {\it prime sum graph} $SG_{n}$ of order $n$, associated vertices with the integers $\{1, 2, \cdots, n\}$ and connecting two vertices $i$ and $j$ if and only if $i+j$ is a prime number. It was proved by Greenfield and Greenfield \cite{greenfield} and reproduced by Galvin \cite{Galvin} that $SG_{n}$ contains a perfect matching if and only if $n$ is even. In addition, Filz's conjecture can be rephrased as that $SG_{2n}$ contains a Hamilton cycle for any integer $n\geq 2$. Recently, by adapting the result of the Bounded Gap Conjecture \cite{maynard, polymath}, Chen et al. have proved that there exist infinitely many integers $n$ such that $SG_{2n}$ contains a Hamilton cycle.

In a lecture in 2012, Sun conjectured these prime circles can also be obtained when replacing the prime sum with the prime difference. Precisely, he conjectured that for any positive integer $n\geq 5$, there exists a cyclic permutation $a_1,a_2,\cdots,a_n$ of the integers $1,2, \cdots, n$ such that $|a_1-a_{2}|, |a_2-a_3|, \cdots, |a_{n-1}-a_n|, |a_n-a_1|$ are prime. Soon after, Chen and Sun \cite{oeis} verified this conjecture by providing a simple construction. Similarly, one can define a graph $G_n$ whose vertex set is $\{1, 2, \cdots, n\}$ and two vertices $i$ and $j$ are adjacent if and only if $|i-j|$ is a prime. We call $G_n$ the {\it prime difference graph of order $n$}.

From the point of view of graphs, Chen and Sun's result can be further extended. Their result can be rephrased as the prime difference graph $G_n$ has a Hamilton cycle for $n\geq 5$. In fact, it is easily shown from their result that the prime difference graph $G_n$, $n\geq 6$, is {\it pancyclic}, which means there are cycles of each length $\ell$ for $3\leq \ell \leq n$. Since $G_{n-1}\subseteq G_{n}$, $G_n$, $n\geq 6$, contains cycles of each length $\ell$,  $5\leq \ell \leq n$. Together with the observation that $G_6$ contains a 3-cycle $(136)$ and a 4-cycle $(1463)$, one can conclude that $G_n$, $n\geq 6$, is pancyclic.

In this paper, we are interested in studying more about the robustness of Hamiltonicity and the cycle structure in the prime difference graphs. Our main results show how to strengthen Chen and Sun's result in three directions. The rest of this paper is organized accordingly.
In Section 2, we prove that for any two integers $n_1$ and $n_2$, $1\leq n_1 < n_2\leq n$, $G_n$ contains a Hamilton path from $n_1$ to $n_2$ provided $n\geq 5$ except for some particular cases of small $n$. This implies that for any designated edge $e$ in $G_n$, $n\geq 5$, there exists a Hamilton cycle passing through $e$. In Section 3, we show that, for $n\geq 7$, $G_n$ contains any 2-factor of the complete graph of order $n$ as a subgraph. Finally, in Section $4$, we study  Hamilton cycles in $G_n$ which are generated by two prime differences $p$ and $q$, that is, all edges are of differences either $p$ and $q$.

\section{Hamilton paths with any designated end-vertices}

This section mainly focuses on constructing a Hamilton path with any designated end-vertices in the prime difference graph. As a consequence, this implies that for any edge $e$ in the prime difference graph there exists a Hamilton cycle containing $e$. As our strategy is based on a recursive construction, we need some basic notations and propositions before going through the details of its proof.

We first extend the notion of the prime difference graphs to a more general setting. Let $[m,n]$ denote the set of positive integers in the interval of $m$ and $n$, i.e., $\{m, m+1, \cdots, n\}$. For any interval of consecutive integers $[m,n]$, define $G_{m,n}$ as the prime difference graph associated vertices with $[m,n]$. Obviously, $G_{1,n}$ is indeed $G_n$.
We use $\langle a_1,a_2,\cdots,a_\ell\rangle$ as a path notation  to record the ordering in the path. Sometimes, for clarity, we use $n_1\xrightarrow{[m,n]} n_2$ to omit the ordering but only to record two endpoints of a path from $n_1$ to $n_2$ which uses all of the integers in the interval $[m,n]$.

The following propositions will be repeatedly used in the proofs of the main results.

\begin{pro}\label{sym} We have the following two properties:\\
(1) {\bf Complement property}: In $G_n$, $\langle v_1, v_2, \dots, v_{n}\rangle$ is a Hamilton path if and only if $\langle n+1-v_1, n+1-v_2, \dots, n+1-v_{n}\rangle$ is a Hamilton path.\\
(2) {\bf Shift property}: For any positive integers $a, b$ and $k$, $G_{a,b}$ is isomorphic to $G_{a+k,b+k}$. Moreover, $\langle v_1, v_2, \dots, v_{b-a+1}\rangle$ is a Hamilton path of $G_{a,b}$ if and only if $\langle v_1 +k, v_2 +k, \dots ,v_{b-a+1}+k\rangle$ is a Hamilton path of $G_{a+k,b+k}$.
\end{pro}
\begin{proof} Both properties follow directly from a simple observation. \end{proof}

Given $n_1, n_2 \in [m,n]$, we say a graph $G_{m,n}$ is {\it $(n_1,n_2)$-feasible} if there exists a Hamilton path from $n_1$ to $n_2$ in $G_{m,n}$. For example, $G_7$ is $(1,3)$-feasible since $\langle 1,6,4,2,7,5,3\rangle$ is a Hamilton path in $G_7$.

\begin{la}\label{la21}
For each $n\geq 6$, $G_n$ is $(1,m)$-feasible where $2\leq m \leq 6$.
\end{la}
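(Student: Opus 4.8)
The plan is to prove the lemma by induction on $n$, keeping the target endpoint $m$ fixed as a parameter throughout. For the base, I would write down explicitly, for the smallest values of $n$ and for each $m\in\{2,3,4,5,6\}$, a Hamilton path from $1$ to $m$; these are short enough to exhibit by hand (for instance $\langle 1,4,2,5,3,6\rangle$ realizes the case $(1,6)$ in $G_6$, where the degenerate coincidence $m=n=6$ occurs). Since the inductive step below advances $n$ in steps of $2$, one needs one explicit starting path for each parity of $n$ and each of the five values of $m$.

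The inductive step is where the real work lies. Given a Hamilton path $P$ from $1$ to $m$ in $G_n$, I want to produce one in $G_{n+2}$ with the same endpoints. The obstruction is that the two new vertices $n+1$ and $n+2$ differ by $1$, which is not prime, so they cannot be chained to each other, nor appended at the ends (the ends $1$ and $m$ are fixed and small, so extending there is not available). Instead I would splice each new vertex into the interior of $P$ along an existing difference-$3$ edge, using the two prime differences $2$ and $5$: for example, if $n$ and $n-3$ are consecutive in $P$, I replace that edge by $\langle n, n+2, n-3\rangle$ (differences $2$ and $5$), and if $n-1$ and $n-4$ are consecutive I replace that edge by $\langle n-1, n+1, n-4\rangle$. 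Both splices are local, disjoint, and leave the endpoints $1$ and $m$ completely untouched.

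To guarantee that two such host edges are always present, I would strengthen the induction hypothesis to carry a structural invariant: namely that the Hamilton path produced for $G_n$ always contains the difference-$3$ edges $\{n,n-3\}$ and $\{n-1,n-4\}$ (or an equivalent prescribed local pattern among the top vertices). The crux, and the step I expect to be the main obstacle, is re-establishing this invariant \emph{after} each splice: the inserted vertices become the new top vertices of $G_{n+2}$, so one must check that the rerouted path again exhibits the required configuration near its largest entries, possibly by choosing the splice pattern to depend on the parity of $n$. Maintaining the invariant uniformly for all five values of $m$ simultaneously, rather than merely asserting the existence of some Hamilton path, is the delicate bookkeeping; once it is set up correctly the insertion goes through mechanically. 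Where it is convenient to transport a path fragment between intervals I would invoke the Shift property of Proposition~\ref{sym}, and the parity split in the invariant is precisely why the induction advances by $2$ and why base paths are required for both even and odd $n$.
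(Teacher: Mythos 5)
Your inductive step has a genuine gap, and it is exactly at the point you flag as ``the crux'': the structural invariant you propose is not merely hard to re-establish after the splice --- it is \emph{destroyed} by your own splicing rule, always. Your invariant asks that the path in $G_n$ contain the difference-$3$ edges $\{n,n-3\}$ and $\{n-1,n-4\}$, and your splices insert $n+2$ into the first edge and $n+1$ into the second. After doing so, the neighbours of $n+2$ in the new path are $n$ and $n-3$, and the neighbours of $n+1$ are $n-1$ and $n-4$; but the invariant at level $n+2$ requires the edges $\{n+2,n-1\}$ and $\{n+1,n-2\}$, neither of which can ever be produced by these splices. A concrete instance: for $n=6$, $m=2$, the unique Hamilton path from $1$ to $2$ in $G_6$ is $\langle 1,4,6,3,5,2\rangle$, which satisfies your invariant (it contains $\{6,3\}$ and $\{5,2\}$); your splices yield $\langle 1,4,6,8,3,5,7,2\rangle$, in which $8$ is adjacent to $6$ and $3$, and $7$ to $5$ and $2$, so the required edges $\{8,5\}$ and $\{7,4\}$ are absent. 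No parity trick applied to \emph{these} splices can fix this; what is needed is a redesigned invariant (e.g.\ asserting the existence of disjoint ``host'' edges whose differences alternate between $2$, $3$, $5$, $7$ across levels, with correspondingly varied splice patterns), and working that out is essentially the whole content of the induction. As written, the proposal reduces the lemma to an unproved claim.

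For comparison, the paper avoids interior-edge bookkeeping altogether. It fixes $m$ and recurses on the \emph{endpoints}: a Hamilton path from $1$ to $m'$ in a smaller graph $G_{n-k}$ (with $k=2,4$ or $5$ depending on $m$) is shifted upward via the Shift property of Proposition~\ref{sym} to a path on an upper interval, and then a short explicit prefix/suffix through the small vertices (e.g.\ $\langle 1,4,2,P,3\rangle$ for $m=3$, where $P$ runs from $5$ to $6$ in $G_{5,n}$) glues it into a $1$-to-$m$ path in $G_n$. The only interface between levels is the pair of endpoints of the shifted path, which the induction hypothesis controls by construction, so nothing needs to be maintained about the interior of the path. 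If you want to salvage your insertion idea, the cleanest fix is to borrow this move: rather than tracking edges near the top of the path, keep the new vertices at the \emph{bottom} (or top) as a short explicit segment and shift the inductively obtained path out of their way.
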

\begin{proof}
For the sake of clarity, we split the proof into five cases.\\
{Case 1:} $m =2$\\
By direct construction, $\langle 1,4,6,3,5,2\rangle$ and $\langle 1,4,6,3,5,7,2\rangle$ are Hamilton paths of $G_6$ and $G_7$ from $1$ to $2$ respectively.
Now we will construct a Hamilton path from $1$ to $2$ in $G_n$ for each $n\geq 8$ recursively. Assume that we have constructed  a Hamilton path from $1$ to $2$ in $G_{n-2}$. Then, by the shift property, $G_{3,n}$ has a Hamilton path $P$ from $3$ to $4$. Now, $\langle 1,P,2\rangle$ is a Hamilton path in $G_n$ as desired.\\
{Case 2:}  $m =3$\\
By direct construction, we have the following Hamilton paths of $G_n$, $n=6,7,8,9$, respectively: $\langle 1,6,4,2,5,3\rangle$, $\langle 1,6,4,2,7,5,3\rangle$, $\langle 1,4,2,7,5,8,6,3\rangle$ and $\langle 1,4,2,5,7,9,6,8,3\rangle$. Again, we construct the desired Hamilton path recursively for $n\geq 10$.
Assume that such a path $P'$ has been constructed for $G_{n-4}$. For $n\geq 10$, $G_{5,n}$ has a Hamilton path $P$ from $5$ to $6$ following from the construction of $P'$ and the shift property. Now, it implies that $\langle 1,4,2,P,3\rangle$ is a Hamilton path we need.

Since the construction can be obtained by a similar way, in the following we only list the initial cases and the corresponding construction. \\
{Case 3:}  $m =4$\\
(1) $\langle 1,3,5,2,4\rangle$, $\langle 1,6,3,5,2,4\rangle$, $\langle 1,6,3,5,2,7,4\rangle$, $\langle 1,3,6,8,5,7,2,4\rangle$ are Hamilton paths from $1$ to $4$ for $5\leq n\leq 8$, respectively.\\
(2) Let $P$ be a Hamilton path from $5$ to $7$ defined on $G_{5,n}$ for $n\geq 9$. Then, $\langle 1,3,P,2,4\rangle$ is a Hamilton path.\\
{Case 4:}  $m =5$\\
(1) Initial cases are $\langle 1,3,6,4,2,5\rangle$, $\langle 1,3,6,4,2,7,5\rangle$, $\langle 1,8,3,6,4,2,7,5\rangle$, $\langle 1,4,2,7,9,6,3,8,5\rangle$ and $\langle 1,4,2,9,6,3,8,10,7,5\rangle$ for $6\leq n\leq 10$, respectively.\\
(2) For $n\geq 11$, let $P$ be a Hamilton path from $6$ to $9$ defined on $G_{6,n}$. Then $\langle 1,3,P,4,2,5\rangle$ is a Hamilton path.\\
{Case 5:}  $m =6$\\
(1) Initial cases are $\langle 1,3,5,2,4,6\rangle$, $\langle 1,4,7,2,5,3,6\rangle$, $\langle 1,8,3,5,7,2,4,6\rangle$, $\langle 1,4,7,9,2,5,3,8,6\rangle$ and $\langle 1,4,2,5,3,8,10,7,9,6\rangle$ for $6\leq n\leq 10$, respectively.\\
(2) Let $P'$ be a Hamilton path from $6$ to $7$ defined on $G_{6,n}$ for $n\geq 11$ and $P$ be a Hamilton path from $7$ to $6$ by reversing the order of $P'$. Then, $\langle 1,3,5,2,4,P\rangle$ is the desired Hamilton path of $G_n$ from $1$ to $6$.
\end{proof}

With the above lemma, we can further extend it.
\begin{pro}\label{pro22}
For any integers $m$ and $n$ with $2 \leq m \leq n$ and $n \geq 6$, $G_n$ is $(1,m)$-feasible.
\end{pro}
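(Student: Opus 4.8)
The plan is to prove the statement by strong induction on $n$, reducing everything to Lemma~\ref{la21} (which already disposes of the range $2\le m\le 6$ whenever $n\ge 6$) together with a single recursive step that decreases both $m$ and $n$ by $5$ while preserving the shape of the problem. Since Lemma~\ref{la21} gives $(1,m)$-feasibility for all $2\le m\le 6$, the only work left is the range $m\ge 7$, which in particular forces $n\ge 7$.

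For the inductive step I would take $n\ge 11$ and $7\le m\le n$ and split the vertex set as $[1,n]=[1,5]\cup[6,n]$. On the lower block I use the fixed Hamilton path $\langle 1,3,5,2,4\rangle$ of $G_{1,5}$, which runs from $1$ to $4$ and hides the awkward vertex $5$ in its interior. I then bridge to the upper block through the edge between $4$ and $6$, whose difference $2$ is prime. It remains to traverse $[6,n]$ from $6$ to $m$; by the shift property (Proposition~\ref{sym}(2)) such a path exists in $G_{6,n}$ if and only if $G_{n-5}$ has a Hamilton path from $1$ to $m-5$, i.e.\ iff $G_{n-5}$ is $(1,m-5)$-feasible. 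Because $7\le m\le n$ gives $2\le m-5\le n-5$ and $n-5\ge 6$, the induction hypothesis supplies exactly this path, and concatenating yields the desired Hamilton path $\langle 1,3,5,2,4,6,\dots,m\rangle$ of $G_n$ from $1$ to $m$.

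The key design choice making the recursion close up is that the bridge lands on the \emph{bottom} vertex $6=5+1$ of the upper interval: after the shift this becomes the vertex $1$, so the leftover problem is again a $(1,\cdot)$-feasibility question rather than some path between two interior endpoints. Ending the lower block at $4=5-1$ (rather than at $5$) is forced, since $5$ and $6$ differ by the non-prime $1$ and so cannot serve as the bridge.

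The base cases are $6\le n\le 10$, which is precisely where the requirement $n-5\ge 6$ fails. For these, the values $2\le m\le 6$ are handled by Lemma~\ref{la21}, and the finitely many remaining pairs with $m\ge 7$, namely $(n,m)\in\{(7,7),(8,7),(8,8),(9,7),(9,8),(9,9),(10,7),\dots,(10,10)\}$, are settled by exhibiting explicit Hamilton paths (for instance $\langle 1,3,6,4,2,5,7\rangle$ for $(7,7)$ and $\langle 1,3,5,7,9,2,4,6,8,10\rangle$ for $(10,10)$). I expect these base cases to be the only genuine bookkeeping obstacle: the recursion itself is immediate once the block-and-bridge is fixed, but one must check each small pair by hand or by computer, and confirm that the reduction's requirement $n-5\ge 6$ is exactly what pins the base range at $n\le 10$.
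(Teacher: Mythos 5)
Your proposal is correct and takes essentially the same route as the paper: the paper's proof is exactly your recursion unrolled, chaining shifted copies of the $G_6$-path $\langle 1,3,5,2,4,6\rangle$ from Lemma~\ref{la21} to walk $1\to 6\to 11\to\cdots$ and finishing with one of the same ten explicit small paths for $7\le m\le n\le 10$ (your two sample paths even coincide with entries of the paper's table). The only bookkeeping left in your version is to list the eight remaining base-case paths, which indeed exist.
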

\begin{proof}
We first prove that the assertion is true for $n\leq 10$.
By Lemma \ref{la21}, it holds for $2\leq m \leq 6$ and $n\leq 10$. So, it is sufficient to consider $m \geq 7$. By direct construction, we have the following Hamilton paths for $n=7,8,9,10$ and $m=7,8,9,10$, respectively.
\begin{center}
\begin{tabular}{c|c|c}
      $n$ & $m$ & path \\
    \hline $7$ & $7$ & $\langle 1,3,6,4,2,5,7\rangle$ \\
    \hline $8$ & $7$ & $\langle 1,8,6,3,5,2,4,7\rangle$ \\
    \hline $8$  & $8$ & $\langle 1,3,5,7,2,4,6,8\rangle$\\
    \hline $9$ & $7$ & $\langle 1,3,5,8,6,9,4,2,7\rangle$\\
    \hline $9$ & $8$ & $\langle 1,3,5,7,9,2,4,6,8\rangle$ \\
    \hline $9$ & $9$ & $\langle 1,3,5,8,6,4,7,2,9\rangle$ \\
    \hline $10$ & $7$ & $\langle 1,4,2,9,6,3,5,8,10,7\rangle$\\
    \hline $10$ & $8$ & $\langle 1,4,2,9,7,10,5,3,6,8\rangle$ \\
    \hline $10$ & $9$ & $\langle 1,8,3,5,10,7,2,4,6,9\rangle$ \\
    \hline $10$ & $10$ & $\langle 1,3,5,7,9,2,4,6,8,10\rangle$\\
\end{tabular}
\end{center}

The above cases will serve as the initialization of the following construction. Let $n=5q_1+r_1\geq 11$ and $m=5q_2+r_2$ where $q_1,q_2\in \mathbb{N}, r_1\in [6,10]$ and $r_2\in [2,6]$. Note that $q_2-q_1\leq 1$ since $m\leq n$.

If $q_2\leq q_1$, then by repeatedly applying Lemma \ref{la21} we have a Hamilton path $\langle 1\xrightarrow{[1,6]} 6 \xrightarrow{[6,11]} 11\xrightarrow{[11,16]} 16 \rightarrow\cdots\rightarrow 5q_2+1\xrightarrow{[5q_2+1,n]} m\rangle$. Notice that the existence of the last path $\langle 5q_2+1\xrightarrow{[5q_2+1,n]} m\rangle$ follows from  the existence of a Hamilton path $\langle 1\xrightarrow{[1,n-5q_2]} r_2\rangle$, which can be derived from the fact $n-5q_2\geq 6$ (when $q_2\leq q_1$) and Lemma \ref{la21}.

If $q_2>q_1$, then $q_2=q_1+1$ and similarly we have a Hamilton path $\langle 1\xrightarrow{[1,6]} 6\xrightarrow{[6,11]} 11\xrightarrow{[11,16]} 16 \rightarrow\cdots\rightarrow 5q_1+1\xrightarrow{[5q_1+1,n]} m\rangle$. The last path $\langle 5q_1+1\xrightarrow{[5q_1+1,n]} m\rangle$ exists because we can obtain a Hamilton path $\langle 1\xrightarrow{[1,r_1]} m-5q_1\rangle$ by Lemma \ref{la21} when $r_1=6$ and by the above initialization when $r_1=7,8,9,10.$ Notice that $m-5q_1\leq r_1$ since $m\leq n$.
\end{proof}


\noindent {\bf Fact}: When $n=5$, $\langle 1,3,5,2,4\rangle$ and $\langle 1,4,2,5,3\rangle$ are Hamilton paths in $G_5$.

Note that, together with the above Fact, Proposition \ref{pro22} can be applied to show that for each $n\geq 5$, $G_n$ is Hamiltonian by simply choosing an integer $m$ such that $m-1$ is a prime; for example, taking $m=4$, a Hamilton path from $1$ to $4$ in $G_n$ can be extended to a Hamilton cycle by adding the edge $\{1,4\}$. We record this as the following corollary.

\begin{cor}[Chen and Sun \cite{oeis}]\label{CS}
The prime difference graph $G_n$ has a Hamilton cycle for any $n\geq 5$.
\end{cor}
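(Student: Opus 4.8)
The plan is to convert the path-existence results already established into a cycle. The elementary observation driving everything is that a Hamilton path $\langle v_1,\dots,v_n\rangle$ closes up into a Hamilton cycle precisely when its two end-vertices are adjacent in $G_n$, i.e. when $|v_1-v_n|$ is prime. Thus it suffices to exhibit, for every $n\geq 5$, a Hamilton path whose endpoints lie at prime distance, and the cleanest way to guarantee this is to fix the endpoints to be $1$ and some $m$ with $m-1$ prime.

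First I would take $m=4$, so that $m-1=3$ is prime and the closing edge $\{1,4\}$ always belongs to $G_n$. For the generic range $n\geq 6$, Proposition \ref{pro22} applies directly, since the hypotheses $2\leq m\leq n$ and $n\geq 6$ both hold for $m=4$; hence $G_n$ is $(1,4)$-feasible. Adjoining the edge $\{1,4\}$ to the resulting Hamilton path from $1$ to $4$ then produces a Hamilton cycle.

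The remaining value $n=5$ lies outside the scope of Proposition \ref{pro22}, so I would dispose of it by hand using the Fact recorded just above the statement: $\langle 1,3,5,2,4\rangle$ is a Hamilton path of $G_5$ from $1$ to $4$, and since $|1-4|=3$ is prime the edge $\{1,4\}$ closes it into a Hamilton cycle.

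I do not anticipate any genuine obstacle here. All of the combinatorial effort has already been absorbed into Proposition \ref{pro22} and its supporting Lemma \ref{la21}; the only point to watch is that the single base case $n=5$ must be handled separately, precisely because the proposition requires $n\geq 6$. The corollary is therefore a one-line consequence once one observes that end-vertices at prime distance turn a Hamilton path into a Hamilton cycle.
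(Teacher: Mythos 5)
Your proof is correct and matches the paper's own argument exactly: both take $m=4$ so that the closing edge $\{1,4\}$ has prime difference $3$, invoke Proposition \ref{pro22} for $n\geq 6$, and handle $n=5$ separately via the Fact that $\langle 1,3,5,2,4\rangle$ is a Hamilton path from $1$ to $4$. Nothing to add.
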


In fact, Proposition \ref{pro22} can be further extended to a more general result with arbitrarily designated end-vertices for large $n$. As for small $n$, there does not exist such a path for some particular end-vertices $(n_1,n_2)$. For the sake of completeness, we shall also point out those particular cases in the following lemma.

\begin{la}\label{Hpath}
Let $1\leq n_1 < n_2 \leq n$ and $5\leq n\leq 8$. Then $G_n$ is $(n_1,n_2)$-feasible except for the following cases:\\
when $n=5$, $(n_1,n_2)\in \{(1,2),(2,3),(3,4),(4,5),(1,5)\}$;\\
when $n=6$, $(n_1,n_2)\in \{(2,3),(3,4),(4,5)\}$;\\
when $n=7$, $(n_1,n_2)\in \{(3,4),(4,5)\}$;\\
when $n=8$, $(n_1,n_2)\in \{(4,5)\}$.
\end{la}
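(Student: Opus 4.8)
The plan is to treat this as a finite verification over the four values $n\in\{5,6,7,8\}$ and all pairs $1\le n_1<n_2\le n$, splitting the work into a \emph{positive part} (exhibiting Hamilton paths for the feasible pairs) and a \emph{negative part} (proving non-existence for the listed exceptions). Throughout, the first move is to halve the number of pairs using the complement property of Proposition~\ref{sym}(1): since $v\mapsto n+1-v$ sends a Hamilton path from $n_1$ to $n_2$ to one from $n+1-n_1$ to $n+1-n_2$, both feasibility and the set of exceptions are invariant under $(n_1,n_2)\mapsto(n+1-n_2,n+1-n_1)$. In particular the exceptions pair up as $(1,2)\leftrightarrow(4,5)$ and $(2,3)\leftrightarrow(3,4)$ with $(1,5)$ self-paired for $n=5$; as $(2,3)\leftrightarrow(4,5)$ with $(3,4)$ self-paired for $n=6$; as $(3,4)\leftrightarrow(4,5)$ for $n=7$; and $(4,5)$ self-paired for $n=8$. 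So it suffices to settle one representative of each pair.

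For the feasible pairs I would simply display an explicit Hamilton path for each non-exceptional $(n_1,n_2)$, up to the complement symmetry, collecting them in a table as done for Proposition~\ref{pro22}; for $n=5$ the two paths recorded in the Fact, together with their complements and reversals, already cover all five feasible pairs. This part is routine once the list is generated.

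The negative part is where the content lies, and I would handle $n=5$ separately from $n\in\{6,7,8\}$. For $n=5$ only the primes $2,3$ are available, so $G_5$ is $2$-regular and equals the single cycle $\langle1,3,5,2,4\rangle$; a cycle $C_m$ admits a Hamilton path between two vertices precisely when they are adjacent on the cycle, and reading off its five edges gives exactly the five feasible pairs, hence exactly the five claimed exceptions. For $n\in\{6,7,8\}$ I would use a uniform \emph{boundary-crossing} argument. Given an exceptional pair $(n_1,n_2)$, I exhibit a set $A\subseteq V\setminus\{n_1,n_2\}$ with $|A|\le n-3$ all of whose edges leaving $A$ terminate in $n_1$ or $n_2$. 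The candidate sets are $A=\{2\}$ for $(4,5)$ and $A=\{2,5\}$ for $(3,4)$ when $n=6$; $A=\{1,3,6\}$ for $(4,5)$ when $n=7$; and $A=\{1,3,6,8\}$ for $(4,5)$ when $n=8$ (the remaining exceptions follow by complement). Now suppose $P$ is a Hamilton path from $n_1$ to $n_2$. Since both endpoints lie outside $A$, walking along $P$ and tracking membership in $A$ shows that the number of $P$-edges with exactly one end in $A$ is even; as every such edge is incident to $n_1$ or $n_2$, and each of these endpoints is incident to exactly one edge of $P$, at most two boundary edges occur, hence exactly two—one at $n_1$ and one at $n_2$. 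Consequently $A$ is traversed as a single contiguous block, forcing $P$ to be $n_1\to(\text{block through }A)\to n_2$ with vertex set $\{n_1\}\cup A\cup\{n_2\}$ of size $|A|+2<n$; this omits a vertex, a contradiction.

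The main obstacle is the negative part for $n=7$ and $n=8$, where—unlike $n=6$—there are no degree-$2$ vertices to force edges, so the usual pendant/forcing tricks do not immediately apply. The crux is to recognize that certain sets such as $\{1,3,6\}$ and $\{1,3,6,8\}$ are attached to the rest of the graph only through the two prescribed endpoints; once such a set is isolated, the parity count finishes the job. Verifying that these sets have no other outside neighbours is a short but essential check, relying on which small differences (e.g. $1,4,6$) fail to be prime.
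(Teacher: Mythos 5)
Your proposal is correct, and on the negative side it actually goes further than the paper. The paper's proof consists solely of the positive part: it invokes Proposition~\ref{pro22} and the complement property to reduce to $n_1,n_2\notin\{1,n\}$ and then lists an explicit Hamilton path for each remaining feasible pair in Table~\ref{table1}; the infeasibility of the exceptional pairs is asserted with no argument at all (implicitly left as a finite check). Your positive part is the same table-based plan, and in a final write-up you would still need to produce the table; one small slip here is your claim that the two paths of the Fact together with complements and reversals cover all five feasible pairs for $n=5$ --- they miss $(2,4)$ --- but this is harmless because your observation that $G_5$ is exactly the $5$-cycle $(1,3,5,2,4)$ settles both directions for $n=5$ simultaneously (Hamilton paths in a cycle exist precisely between adjacent vertices). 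The genuinely new content is your proof of the exceptions for $n=6,7,8$: the sets $A=\{2\}$ and $A=\{2,5\}$ for $n=6$, $A=\{1,3,6\}$ for $n=7$, and $A=\{1,3,6,8\}$ for $n=8$ do have the property that every edge leaving $A$ ends at $n_1$ or $n_2$ (this hinges on the differences $1,4,6$ not being prime, and I verified each case), and your parity/contiguous-block argument from there is sound. It is worth noting that your argument is a hands-on rendering of a standard fact: your $A$ witnesses that $G_n-\{n_1,n_2\}$ is disconnected, whereas deleting the two \emph{endpoints} of a Hamilton path always leaves a connected graph; stating it this way would shorten the negative part to two lines. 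In short: same approach as the paper where the paper has a proof, plus a correct proof of the part the paper leaves unjustified.
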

\begin{proof} By Proposition \ref{pro22} and the complement property, for $n\geq 6$ it suffices to consider cases of $n_1, n_2\not\in\{1,n\}$. The proof is done by simply listing a corresponding solution for each designated $(n_1,n_2)$, as in Table \ref{table1}.
\end{proof}
\begin{table}[h!]
\begin{center}
    \begin{tabular}{ c| ccccccccccccc}
    $n$ & $(n_1,n_2)$ & path & complement & $(n_1,n_2)$  & path   \\ \hline
    5  & $(1,3)$ & $\langle 1,4,2,5,3\rangle$ & $\leftrightarrow $ & $(3,5)$ & $\langle 5,2,4,1,3\rangle$   \\
       & $(1,4)$ & $\langle 1,3,5,2,4\rangle$ & $\leftrightarrow $ & $(2,5)$ & $\langle 5,3,1,4,2\rangle$   \\
       & $(2,4)$ & $\langle 2,5,3,1,4\rangle$ &  &  &    \\ \hline
    6  & $(2,4)$ & $\langle 2,5,3,6,1,4\rangle$ & $\leftrightarrow $ & $(3,5)$ & $\langle 5,2,4,1,6,3\rangle$   \\
       & $(2,5)$ & $\langle 2,4,6,1,3,5\rangle$ &  &  &    \\ \hline
    7  & $(2,3)$ & $\langle 2,5,7,4,1,6,3\rangle$ & $\leftrightarrow $ & $(5,6)$ & $\langle 6,3,1,4,7,2,5\rangle$   \\
       & $(2,4)$ & $\langle 2,7,5,3,6,1,4\rangle$ & $\leftrightarrow $ & $(4,6)$ & $\langle 6,1,3,5,2,7,4\rangle$   \\
       & $(2,5)$ & $\langle 2,7,4,6,1,3,5\rangle$ & $\leftrightarrow $ & $(3,6)$ & $\langle 6,1,4,2,7,5,3\rangle$   \\
       & $(2,6)$ & $\langle 2,4,7,5,3,1,6\rangle$ &  & &   \\
       & $(3,5)$ & $\langle 3,1,6,4,2,7,5\rangle$ &  & &   \\    \hline
    8  & $(2,3)$ & $\langle 2,5,7,4,1,6,8,3\rangle$ & $\leftrightarrow $ & $(6,7)$ & $\langle 7,4,2,5,8,3,1,6\rangle$   \\
       & $(2,4)$ & $\langle 2,7,5,3,8,6,1,4\rangle$ & $\leftrightarrow $ & $(5,7)$ & $\langle 7,2,4,6,1,3,8,5\rangle$   \\
       & $(2,5)$ & $\langle 2,7,4,6,1,8,3,5\rangle$ & $\leftrightarrow $ & $(4,7)$ & $\langle 7,2,5,3,8,1,6,4\rangle$   \\
       & $(2,6)$ & $\langle 2,4,7,5,3,1,8,6\rangle$ & $\leftrightarrow $ & $(3,7)$ & $\langle 7,5,2,4,6,8,1,3\rangle$   \\
       & $(2,7)$ & $\langle 2,4,1,3,6,8,5,7\rangle$ &  & &   \\
       & $(3,4)$ & $\langle 3,6,1,8,5,2,7,4\rangle$ & $\leftrightarrow $ & $(5,6)$ & $\langle 6,3,8,1,4,7,2,5\rangle$   \\
       & $(3,5)$ & $\langle 3,1,8,6,4,2,7,5\rangle$ & $\leftrightarrow $ & $(4,6)$ & $\langle 6,8,1,3,5,7,2,4\rangle$   \\
       & $(3,6)$ & $\langle 3,1,4,2,7,5,8,6\rangle$ &  & &   \\
    \end{tabular}\\
    \caption{The paths corresponding to the designated $(n_1,n_2)$}\label{table1}
\end{center}
\end{table}

\begin{thm}\label{thm1}
For any integers $n, n_1$ and $n_2$ satisfying $1\leq n_1 <n_2 \leq n$ and $n\geq 9$, $G_n$ is $(n_1,n_2)$-feasible.
\end{thm}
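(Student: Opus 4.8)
The plan is to reduce an arbitrary pair $(n_1,n_2)$ to the case already handled by Proposition~\ref{pro22}, in which one endpoint is the smallest vertex. First, if $n_1=1$ then $(1,n_2)$-feasibility is exactly Proposition~\ref{pro22}; if $n_2=n$ then the complement property in Proposition~\ref{sym} turns $(n_1,n)$ into $(1,n+1-n_1)$, again covered by Proposition~\ref{pro22}. Hence it suffices to treat the interior case $2\le n_1<n_2\le n-1$.

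For the interior case the main tool is a two-block decomposition of $[1,n]$ joined by a short bridge of prime difference. Split $[1,n]$ into the lower block $[1,n_1]$ and the upper block $[n_1+1,n]$. On the lower block I want a Hamilton path starting at the top vertex $n_1$ and ending at any prescribed exit vertex $e\in[1,n_1-1]$; by the complement property this is the same as a $(1,n_1+1-e)$-path in $G_{n_1}$, which exists for every such $e$ by Proposition~\ref{pro22} as soon as $n_1\ge 6$. On the upper block, the shift property identifies $G_{n_1+1,n}$ with $G_{n-n_1}$, and a Hamilton path from the bottom vertex $n_1+1$ to $n_2$ corresponds to a $(1,n_2-n_1)$-path, which exists by Proposition~\ref{pro22} when $n-n_1\ge 6$ and $n_2\ge n_1+2$. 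Choosing the exit $e=n_1-1$ and bridging to $n_1+1$ by the edge of difference $2$ then glues the two Hamilton paths into the desired $n_1\xrightarrow{[1,n]}n_2$. This already settles all pairs with $6\le n_1\le n-6$ and $n_2\ge n_1+2$.

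Three families of pairs escape this argument, and each is reduced or dispatched separately. The range $n_1\ge n-5$ is converted by the complement property into a pair whose smaller endpoint is at most $5$, so it reduces to the small-$n_1$ family. The small-$n_1$ family $n_1\in\{2,3,4,5\}$ is handled by replacing the lower block with a short fixed prefix that sweeps up the few small vertices and exits at a vertex of prime distance from the remaining top interval $[s+1,n]$ (for instance $\langle 2,1\rangle$ bridging $1\to 3$, or $\langle 3,1,4,2\rangle$ bridging $2\to 5$, or $\langle 5,2,4,1,3\rangle$ bridging $3\to 6$); after the bridge one runs a $(1,\cdot)$-path on the top interval via Proposition~\ref{pro22}, which is legitimate once that interval has at least $6$ vertices. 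Finally, the consecutive pairs $n_2=n_1+1$ are treated by the same two-block idea but bridging between two vertices distinct from both endpoints (for example $e=n_1-1$ to $n_1+2$, of difference $3$), which again requires both blocks to have size at least $6$; the leftover consecutive pairs with $n_1\le 5$ are given by explicit constructions.

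The main obstacle is precisely these residual cases. The consecutive pairs $(k,k+1)$ are exactly the exceptions recorded for small $n$ in Lemma~\ref{Hpath}, so the heart of the theorem is to confirm that for $n\ge 9$ none of them survives; this, together with the small-$n_1$ gadgets, forces one to keep every block above the size-$6$ threshold of Proposition~\ref{pro22}, and to clear the smallest orders $n\in\{9,10\}$ (where the top interval can dip below $6$) by a finite table of explicit Hamilton paths in the style of Table~\ref{table1}.
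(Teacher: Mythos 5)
Your high-level strategy --- complement/shift symmetry, a two-block split of $[1,n]$ bridged by an edge of difference $2$ (or $3$ for consecutive endpoints), Proposition~\ref{pro22} as the workhorse, and explicit paths for residual small cases --- is exactly the paper's. However, your accounting of which pairs escape the main argument is wrong, and the resulting hole is not confined to small $n$. Take $(n_1,n_2)=(3,5)$ with $n\geq 11$. It is not interior ($n_1<6$); it is not rescued by complementation (its complement is $(n-4,n-2)$, whose smaller endpoint exceeds $n-6$, and complementing again just returns $(3,5)$); it is not consecutive; and it is not confined to $n\in\{9,10\}$, so your finite table does not absorb it. That leaves only your small-$n_1$ gadget, which cannot work here: since $2$ is isolated in $G_3$, any prefix sweeping $[1,s]$ and starting at $3$ needs $s\geq 4$, so either the bridge lands exactly at $5=n_2$ (making the required $(1,\cdot)$-path on the top interval a degenerate path from $5$ to $5$) or $5$ lies inside the prefix, contradicting that $n_2$ must be the terminal vertex of the top path. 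The pair $(4,6)$ fails identically: $G_4$ has no Hamilton path starting at $4$, so the prefix must cover $[1,5]$, and the only prime bridge out of its possible exit vertices lands at $6=n_2$. So for every $n\geq 11$ your proof, as written, proves nothing about $(3,5)$ and $(4,6)$.

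The missing idea is the one the paper uses in its second table: let the bridge leap \emph{over} the bottom of the upper block using a larger prime, and run the top path \emph{downward} so that it terminates at that bottom vertex. Concretely, $\langle 3,1,4,2,7\xrightarrow{[5,n]}5\rangle$ (bridge $2\to 7$ of difference $5$, then a path on $[5,n]$ from $7$ down to $5$, which is a shifted and reversed $(1,3)$-path, available for all $n\geq 9$ once one also uses the Fact about $G_5$ when $n=9$), and $\langle 4,1,3,5,2,9\xrightarrow{[6,n]}6\rangle$ (bridge of difference $7$) for $(4,6)$ with $n\geq 10$. A secondary under-specification of the same kind: your ``explicit constructions'' for the consecutive pairs $(2,3),(3,4),(4,5),(5,6)$ must cover infinitely many $n$, so they cannot be a finite table; they must be schematic families, and they too require paths that return to a small terminal vertex (the paper's $\langle 2,4\xrightarrow{[4,n]}6,1,3\rangle$ ends with a descending suffix, a device absent from your prefix-plus-bridge framework). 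Until these descending constructions are added, the proof has a genuine gap.
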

\proof By Proposition \ref{sym}(1), without loss of generality, we may assume that $n_1\leq n-n_2+1$.
Observe that if there exists a Hamilton path $P_1$ from $n_1$ to $n_1 -1$ in $G_{n_1}$ and a Hamilton path $P_2$ from $n_1 +1$ to $n_2$ in $G_{n_{1}+1,n}$, then $\langle P_1,P_2\rangle$ is an $n_1$-to-$n_2$ Hamilton path in $G_n$ unless $n_2=n_1+1$. In the case of $n_2=n_1+1$, we have the same conclusion by slightly modifying $P_2$ to be a Hamilton path from $n_1+2$ to $n_2(=n_1+1)$ in $G_{n_{1}+1,n}$. By Proposition \ref{pro22}, both of the above paths can be obtained as long as $n_1 \geq 6$ since $n-n_1\geq n-n_2+1\geq n_1\geq 6$. Notice that the existence of $P_1$ follows directly from Proposition \ref{sym} and Case 1 of Lemma \ref{la21}.

To complete the proof, it suffices to consider the case of $n_1 \leq 5$.
First, assume $n_2 \geq 7$. In this case, $n\geq 11$ because of the assumption $n_1\leq n-n_2+1$. By symmetry and Proposition \ref{pro22}, there is a Hamilton path from  $n_1$ to 6 in $G_6$ and a Hamilton path from $6$ to $n_2$ in $G_{6,n}$. Concatenating them together yields the desired $n_1$-to-$n_2$ Hamilton path.
Next, assume $n_2\leq 6$. Clearly, we only need to consider $n_1 \geq 2$, and thus there are the following cases:
\[(n_1,n_2)\in \{(2,3),(3,4),(4,5),(5,6),(2,4),(3,5),(4,6),(2,5),(3,6),(2,6)\}.\]

By applying Propositions \ref{sym}, \ref{pro22} and Fact, each of the $(n_1,n_2)$ cases mentioned above can be settled by finding a suitable Hamilton path in the following.
\begin{center}
\begin{tabular}{c|c|c}
           case & $n$ & path \\
    \hline $(2,3)$ & $n\geq 9$ & $\langle2,4\xrightarrow{[4,n]} 6,1,3\rangle$ \\
    \hline $(3,4)$ & $n\geq 9$ & $\langle3,1,8\xrightarrow{[5,n]} 5,2,4\rangle$ \\
    \hline $(4,5)$ & $n=9$     & $\langle4,1,3,8,6,9,7,2,5\rangle$\\
                   & $n\geq10$ & $\langle4,1,3,6\xrightarrow{[6,n]} 9,2,5\rangle$\\
    \hline $(5,6)$ & $n\geq10$ & $\langle5,2,4,1,3,8 \xrightarrow{[6,n]} 6\rangle$ \\
    \hline $(2,4)$ & $n\geq 9$ & $\langle2,5\xrightarrow{[5,n]} 8,3,1,4\rangle$ \\
    \hline $(3,5)$ & $n\geq 9$ & $\langle3,1,4,2,7\xrightarrow{[5,n]} 5\rangle$\\
    \hline $(4,6)$ & $n=9$     & $\langle4,1,3,8,5,2,7,9,6\rangle$ \\
                   & $n\geq10$ & $\langle4,1,3,5,2,9\xrightarrow{[6,n]} 6\rangle$ \\
    \hline $(2,5)$ & $n\geq 9$ & $\langle2,4,1,3,8\xrightarrow{[5,n]} 5\rangle$\\
    \hline $(3,6)$ & $n=9$     & $\langle3,1,4,2,9,7,5,8,6\rangle$\\
                   & $n\geq10$ & $\langle3,1,4,2,5\xrightarrow{[5,n]} 6\rangle$\\
    \hline $(2,6)$ & $n=9$     & $\langle2,4,1,3,8,5,7,9,6\rangle$\\
                   & $n\geq10$ & $\langle2,4,1,3,5,8\xrightarrow{[6,n]} 6\rangle$\\
\end{tabular}
\end{center}
Notice that for the case $(n_1,n_2)=(5,6)$ we need not consider the case $n=9$ due to the assumption $n_1\leq n-n_2+1$. This completes the proof.\qed

The above two results fully characterize the existence of Hamilton paths with designated end-vertices in the prime difference graphs. As a consequence, we have the following theorem which demonstrates stronger Hamiltonicity of the prime difference graphs and thus improves Chen and Sun's result.

\begin{thm}
    Given $n\geq 5$, for any edge $e$ in the prime difference graph $G_n$ there exists a Hamilton cycle containing $e$.
\end{thm}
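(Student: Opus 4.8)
The plan is to reduce the statement entirely to the existence of Hamilton paths with prescribed end-vertices, which the preceding results already settle. Write the given edge as $e=\{a,b\}$ with $a<b$; since $e$ is an edge of $G_n$, the difference $b-a$ is a prime. The key observation is that whenever $G_n$ is $(a,b)$-feasible---that is, admits a Hamilton path from $a$ to $b$---adjoining the edge $e$ to the two ends of that path closes it into a Hamilton cycle, and this cycle obviously contains $e$. Hence it suffices to verify that the endpoints of every edge form an $(a,b)$-feasible pair.

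I would first dispose of the range $n\geq 9$ at once: Theorem \ref{thm1} guarantees that $G_n$ is $(a,b)$-feasible for \emph{every} pair $1\le a<b\le n$, so closing the Hamilton path with $e$ finishes this range with no further argument.

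The only place requiring care is the small range $5\le n\le 8$, where Lemma \ref{Hpath} (together with the Fact for $n=5$) lists a handful of non-feasible pairs. Here the one substantive point is to check that each exceptional pair fails to be an edge of $G_n$. Running through the list, the non-feasible pairs for $n=5$ are the four consecutive pairs $(1,2),(2,3),(3,4),(4,5)$ of difference $1$, together with $(1,5)$ of difference $4$; for $6\le n\le 8$ every exceptional pair is a consecutive pair of difference $1$. Since neither $1$ nor $4$ is prime, none of these pairs is an edge of $G_n$. Consequently, for every actual edge $e=\{a,b\}$ in this range the endpoints $(a,b)$ avoid all exceptional cases, so $G_n$ is $(a,b)$-feasible.

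Combining the two ranges, for every $n\ge 5$ and every edge $e=\{a,b\}$ the graph $G_n$ admits a Hamilton path from $a$ to $b$, and adjoining $e$ produces a Hamilton cycle through $e$. I expect the argument to be essentially bookkeeping: all the real work is already carried out in Theorem \ref{thm1} and Lemma \ref{Hpath}, and the sole genuine step---which is also where I would be most careful---is the elementary verification that every non-feasible end-vertex pair has non-prime difference and therefore never arises as an edge.
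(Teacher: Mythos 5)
Your proposal is correct and follows essentially the same route as the paper: reduce the claim to the $(n_1,n_2)$-feasibility results of Theorem \ref{thm1} and Lemma \ref{Hpath}, then observe that every exceptional pair in Lemma \ref{Hpath} has difference $1$ or $4$, neither of which is prime, so no exceptional pair can be an edge. Your write-up is in fact slightly more explicit than the paper's in verifying the non-primality of the exceptional differences, but the argument is identical in substance.
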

\begin{proof}
    Let $n_1$ and  $n_2$ be the end-vertices of the given edge $e$, and without loss of generality assume $1\leq n_1<n_2\leq n$. The existence of a Hamilton cycle passing through $e$ follows directly from the existence of a Hamilton path from $n_1$ to $n_2$ in Lemma \ref{Hpath} and Theorem \ref{thm1}. All we need to concern is whether the forbidden cases described in Lemma \ref{Hpath} are in conflict with the existence of such Hamilton cycles containing $e$. However, the $(n_1,n_2)$ associated with $e$ must be different from that of the forbidden cases since each pair of numbers in the forbidden cases cannot form an edge. This completes the proof.
\end{proof}

\section{2-factors of the prime difference graph}

In this section, we turn our attention to another extension of Hamiltonicity of the prime difference graphs. A {\it 2-factor} is a spanning subgraph of a graph $G$ in which all vertices have degree two, i.e., it is a collection of cycles that together touch each vertex exactly once. Formally, suppose that $H_1, H_2, \cdots, H_k$ are vertex-disjoint subgraphs of $G$ such that $V(G) = \bigcup_{i=1}^kV(H_i)$
and $H_i$ is a cycle for all $i$, $1\leq i\leq k$. Then the union of these $H_i$ is called a 2-factor of
$G$. Obviously, a Hamilton cycle is only one form of many 2-factors of a graph.

Our aim is to show that the prime difference graph $G_n$ contains more about the cycle structure than the existence of a Hamilton cycle. Actually, a simple consequence of Corollary \ref{CS} implies the following result in this direction.

\begin{thm}\label{factor}
For any positive integers $n_i$'s satisfying $n_i\geq 5, n\geq 5$, and $\sum_{i=1}^kn_i=n$, $G_n$ admits  a $2$-factor $\{H_1, H_2, \cdots, H_k\}$ with $|H_i|=n_i$ for all $i$, $1\leq i\leq k$.
\end{thm}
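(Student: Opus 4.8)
The plan is to partition the vertex set $[1,n]$ into consecutive blocks of the prescribed sizes and to place an independent Hamilton cycle on each block. Concretely, I would set $a_0=0$ and $a_i=n_1+n_2+\cdots+n_i$ for $1\le i\le k$, so that $a_k=n$, and declare the $i$-th block to be the interval $B_i=[a_{i-1}+1,\,a_i]$ of consecutive integers. These blocks are pairwise disjoint, their union is $[1,n]$, and $|B_i|=n_i$ for every $i$.

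First I would observe that the subgraph of $G_n$ induced by $B_i$ is precisely $G_{a_{i-1}+1,\,a_i}$, since an edge $uv$ with $u,v\in B_i$ lies in $G_n$ exactly when $|u-v|$ is prime. By the shift property (Proposition \ref{sym}(2)) this induced subgraph is isomorphic to $G_{1,n_i}=G_{n_i}$, and the isomorphism carries Hamilton cycles to Hamilton cycles, because a Hamilton cycle is a Hamilton path whose endpoints are additionally adjacent and adjacency is preserved under translation (differences are unchanged). Since $n_i\ge 5$, Corollary \ref{CS} guarantees that $G_{n_i}$ is Hamiltonian, and translating such a cycle by $a_{i-1}$ yields a Hamilton cycle $H_i$ supported on $B_i$ and lying entirely inside $G_n$.

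Finally, because $B_1,\dots,B_k$ partition $[1,n]$, the cycles $H_1,\dots,H_k$ are vertex-disjoint, together cover every vertex of $G_n$, and satisfy $|H_i|=n_i$. Their union is therefore a $2$-factor of $G_n$ of exactly the required type, which completes the argument. (Note that the hypothesis $n\ge 5$ is automatic once each $n_i\ge 5$ and $k\ge 1$.)

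I do not anticipate a genuine obstacle here: the entire substance of the statement is already packaged in Corollary \ref{CS} together with the shift property, and the construction merely assembles block-wise Hamilton cycles. The only point meriting a single line of care is confirming that every edge used inside a block is a legitimate edge of $G_n$ and that no edge is shared between two different cycles; both follow at once from the translation-invariance of differences and from the disjointness of the blocks.
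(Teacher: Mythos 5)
Your proposal is correct and is essentially the paper's own proof: the paper likewise partitions $[1,n]$ into consecutive intervals of the prescribed lengths and places a Hamilton cycle on each via the shift property of Proposition \ref{sym} together with Corollary \ref{CS}. Your write-up merely makes explicit the details (block endpoints, translation-invariance of prime differences, vertex-disjointness) that the paper leaves implicit.
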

\begin{proof}
    Given a 2-factor containing only cycles of length at least $5$ as described, one can partition $[1,n]$ into consecutive intervals of integers based on these lengths.  Each cycle of the 2-factor can then be obtained by repeatedly applying the shift operation in Proposition \ref{sym} and Corollary \ref{CS}.
\end{proof}

The next result strengthens Theorem \ref{factor} by filling a gap in the cycle lengths $3$ and $4$ of $H_i$'s.

\begin{thm}
For positive integer $n\geq 7$, the prime difference graph $G_n$ contains any 2-factor of the complete graph of order $n$ as a subgraph.
\end{thm}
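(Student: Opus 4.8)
The plan is to translate a 2-factor into a partition of $n$ into cycle lengths $n=n_1+\cdots+n_k$ with each $n_i\ge 3$, and to realize the prescribed cycles on (mostly) consecutive blocks of $[1,n]$. The parts with $n_i\ge 5$ require no new work: assigning them pairwise-disjoint consecutive intervals and invoking Corollary \ref{CS} together with the shift property (Proposition \ref{sym}(2)) produces the required cycles, which is exactly the content of Theorem \ref{factor}. Hence the entire difficulty lies in the parts equal to $3$ and $4$. The strategy is to collect all triangles and quadrilaterals into a single consecutive block $[1,s]$ at the bottom, where $s=3a+4b$ with $a$ the number of $3$'s and $b$ the number of $4$'s, tile that block by triangles and quadrilaterals, and stack the large cycles on the intervals above it.

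The heart of the argument is a tiling lemma: for all $a,b\ge 0$ with $s=3a+4b\ge 7$, the interval $[1,s]$ partitions into $a$ triangles and $b$ quadrilaterals of $G_s$. I would prove this from two ingredients. First, realizability is closed under concatenation: if $[1,s']$ realizes the count-vector $(a',b')$ and $[1,t]$ realizes $(a'',b'')$, then shifting the second tiling onto $[s'+1,s'+t]$ realizes $(a'+a'',b'+b'')$ on $[1,s'+t]$; thus the set of realizable count-vectors is closed under addition. Second, I would exhibit a finite list of explicit base tilings, for instance $[1,7]=(1,3,6)\cup(2,4,7,5)$, $[1,8]=(1,3,6,8)\cup(2,4,7,5)$ and $[1,9]=(1,4,6)\cup(2,7,9)\cup(3,5,8)$, together with tilings for the vectors $(0,3),(4,0),(5,0),(2,1),(3,1),(1,2)$, and then verify that every $(a,b)$ with $3a+4b\ge 7$ is a non-negative integer combination of these generators, which is a short numerical check. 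The only delicate point in producing the base tilings is to avoid leaving three consecutive integers uncovered, since a triangle needs pairwise prime differences and so cannot occupy three consecutive integers.

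That remark is in fact the whole obstacle, and it also explains the hypothesis $n\ge 7$. Single small cycles cannot be isolated on short blocks: there is no triangle on three consecutive integers (the differences $1,1,2$ are not all prime), no quadrilateral on four consecutive integers (on $\{1,2,3,4\}$ the vertex $3$ has only one prime-difference neighbour), and no two disjoint triangles on six consecutive integers (the only triangles inside $[1,6]$ are $\{1,3,6\}$ and $\{1,4,6\}$, which overlap in $1$ and $6$). Consequently the totals $s\in\{3,4,6\}$ fall outside the tiling lemma. I would finish by observing that when $n\ge 7$ these three totals can occur only alongside a large cycle, so the small batch can be absorbed into the interval of one $\ell$-cycle with $\ell\ge 5$: for $s=3$, say, place the triangle on $\{1,3,6\}$ and route the $\ell$-cycle through $2,4$, then a Hamilton path of $G_{7,\ell+3}$ from $7$ to $8$, then $5$ and back to $2$; this is legitimate because the differences along $2\text{-}4$, $4\text{-}7$, $8\text{-}5$, $5\text{-}2$ are prime and the interior Hamilton path from $7$ to $8$ exists by Lemma \ref{la21} (Case 1) and the shift property for all but finitely many small $\ell$, the latter being checked by hand; the totals $s=4$ and $s=6$ are absorbed by the same device. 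The main labour, then, is assembling the finite list of base tilings and verifying these absorption constructions, while everything else reduces to bookkeeping through concatenation and the Hamilton-path results of Section 2.
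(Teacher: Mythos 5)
Your proposal is correct, but it is organized quite differently from the paper's proof, and the comparison is instructive. The paper proceeds by a case analysis on the length and multiplicity of the \emph{minimum} cycle in the given 2-factor: each $C_3$ or $C_4$ is paired with another cycle (small or large) and realized on an initial consecutive block via an explicit interleaved construction (e.g.\ $C_3\cup C_4=(1,3,6)\cup(2,5,7,4)$ on $[1,7]$, $C_3\cup C_{n-3}=(1,3,6)\cup(5,2,4,7\xrightarrow{[7,n]}8)$, $2C_3\cup C_{n-6}$, $3C_3$, $4C_3$, $5C_3$, $3C_4$, etc.), after which one recurses on the remaining components using the shift property. You instead centralize: all triangles and quadrilaterals go into one bottom block $[1,s]$, $s=3a+4b$, and you prove a tiling lemma by observing that realizable count-vectors $(a,b)$ form an additive semigroup and exhibiting finitely many generators $(1,1),(0,2),(3,0),(0,3),(4,0),(5,0),(2,1),(3,1),(1,2)$; the exceptional totals $s\in\{3,4,6\}$ (which you correctly identify, with the correct non-existence arguments that also explain why $n\geq 7$ is tight) are then absorbed into one large cycle, which is essentially the paper's ``$C_3\cup C_{n-3}$'' and ``$2C_3\cup C_{n-6}$'' device rediscovered. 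What your route buys is transparency: the semigroup/generator formulation makes the completeness of the case analysis a short numerical check, whereas the paper's recursion over ``minimum cycle, number of minimum cycles'' is sprawling and harder to audit. What the paper's route buys is that every construction is written out; your proposal defers a comparable amount of finite work — the six unexhibited base tilings, the $s=4$ and $s=6$ absorption gadgets, and the hand-checks for small $\ell$ in the absorption step (e.g.\ $C_3\cup C_\ell$ for $\ell\in\{5,6,7,8\}$, where your Hamilton-path device fails because $G_{\ell-3}$ with $\ell-3\leq 5$ has no $1$-to-$2$ Hamilton path) — but all of these checks do succeed (the paper's tables contain witnesses for each), so the outline is sound and complete in structure.
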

\begin{proof} The proof will be done by recursive constructions. Let $C_\ell$ denote the cycle of length $\ell$ and use $(a_1,a_2,\cdots,a_\ell)$ as a cycle notation to record the cyclic ordering in the cycle. Consider the initial case $n=7$. A $2$-factor in $K_7$ can be either $C_3 \cup C_4$ or $C_7$. The later case is a direct consequence of Theorem \ref{factor} and $(1,3,6)\cup (2,5,7,4)$ is $C_3 \cup C_4$ for the former case.

 From now on, we may assume that $n\geq 8$ and the given 2-factor contains components $C_3$'s or $C_4$'s. The rest of the proof is divided into the following cases based on the length and the number of the minimum cycles in the given 2-factor.

\begin{itemize}
  \item The minimum cycle is $C_4$.
  \begin{itemize}
    \item Only one $C_4$:\\
    $C_4\cup C_5:(1,3,8,6)\cup(2,5,7,9,4)$\\
    $C_4\cup C_6:(1,3,8,6)\cup(2,5,10,7,9,4)$\\
    $C_4\cup C_7:(1,3,8,6)\cup(2,5,10,7,9,11,4)$\\
    $C_4\cup C_8:(1,3,8,6)\cup(2,5,10,7,12,9,11,4)$\\
    $C_4\cup C_{n-4} (n\geq 13):(2,5,7,4)\cup(6,1,3,8\xrightarrow{[8,n]} 9)$ by Propositions \ref{Hpath} and \ref{sym}(2).\\
    The remaining case is the 2-factor consisting of $\{C_4, H_1, H_2, \cdots\}$ where $H_i$'s are cycles of length at least 5. We first construct $C_4\cup H_1$ on $G_{4+|H_1|}$ according to the previous construction and then reduce the given 2-factor on $G_n$ to a smaller 2-factor $\{H_2, \cdots\}$ of cycle length at least 5 on $G_{5+|H_1|,n}$. Obviously, by the previous discussion, the desired 2-factor can be obtained.

    \item More than one $C_4$:\\
    $2C_4:(1,3,8,6)\cup(2,5,7,4)$\\
    $3C_4:(1,3,10,8)\cup(2,5,12,7)\cup(9,11,6,4)$\\
    For the remaining cases, the given 2-factor either consists of only $C_4$'s or contains at least one $C_i$ with $i\geq 5$. The later case can be done by recursively constructing $2C_4$'s and reduce it down to the cases of only one $C_4$ or all cycles of length at least 5;  while the former case can be done by simply constructing $2C_4$'s (or sometimes $3C_4$ if necessary) repeatedly.
  \end{itemize}
  \item The minimum cycle is $C_3$.
  \begin{itemize}
    \item Only one $C_3$:\\
    $C_3\cup C_5:(2,5,7)\cup(1,3,8,6,4)$\\
    $C_3\cup C_6:(1,3,8)\cup(2,5,7,9,6,4)$\\
    $C_3\cup C_7:(1,3,8)\cup(2,5,10,7,9,6,4)$\\
    $C_3\cup C_8:(1,3,8)\cup(2,5,10,7,9,11,6,4)$\\
    $C_3\cup C_{n-3} (n\geq 12):(1,3,6)\cup(5,2,4,7\xrightarrow{[7,n]} 8)$ by Lemma \ref{Hpath} and the shift operation in Proposition \ref{sym}.\\
    $C_3\cup 2C_4:(1,3,8)\cup(4,6,9,11)\cup(2,5,10,7)$\\
    For the remaining cases, which are of the type $\{C_3, H_1, H_2, \cdots\}$ with $H_i$'s being cycles of length at least 4, we first construct $C_3\cup H_1$ on $G_{3+|H_1|}$ according to the previous construction and then reduce the given 2-factor on $G_n$ to a smaller 2-factor $\{H_2, \cdots\}$ of cycle length at least 4 on $G_{4+|H_1|,n}$. Obviously, by the previous discussion, the desired 2-factor can be obtained.

    \item Exactly two $C_3$:\\
    $2C_3\cup C_4:(1,3,8)\cup(4,6,9)\cup(2,5,10,7)$\\
    $2C_3\cup C_5:(1,3,8)\cup(5,10,7)\cup(2,9,11,6,4)$\\
    $2C_3\cup C_{n-6} (n\geq 12):(1,3,6)\cup(2,4,7)\cup (5,8\xrightarrow{[8,n]} 10)$ by Lemma \ref{Hpath} and the shift operation in Proposition \ref{sym}.\\
    For the remaining cases, the given 2-factor contains at least four cycles. We first construct a $C_3\cup C_i$ for some $i\geq 4$ on $G_{i+3}$ and then this case is reduced to the previous case (only one $C_3$) on $G_{i+4,n}$.

    \item More than two $C_3$:\\
    $3C_3:(1,3,8)\cup(2,5,7)\cup(4,6,9)$\\
    $4C_3:(1,3,8)\cup(2,7,9)\cup(4,6,11)\cup(5,10,12)$\\
    $3C_3\cup C_4:(1,3,8)\cup(2,7,9)\cup(5,10,12)\cup(4,6,13,11)$\\
    If the given 2-factor consists of four cycles, then it can be of types $4C_3, 3C_3\cup C_4$, or $3C_3\cup C_i$ for some $i\geq 5$. The former two cases can be constructed as the above. The last case can be obtained by first constructing $3C_3$ on $G_9$ and then the cycle $C_i$, $i\geq 5$, by Corollary \ref{CS}.\\
    $5C_3:(1,3,6)\cup(2,4,15)\cup(5,7,10)\cup(8,11,13)\cup(9,12,14)$\\
    For the rest, it suffices to consider the case that the 2-factor contains at least five cycles, which can be one of the following cases.\\
    $mC_3 (m\geq 6):$ This can be obtained immediately by using combinations of $3C_3$'s and $4C_3$'s.  \\
    $mC_3\cup C_4 (m\geq 4):$ We first construct $C_3\cup C_4$ on $G_7$ and then this case is reduced to the case $(m-1)C_3$, which can be  obtained by the above discussion.\\
    $mC_3\cup H_1\cup H_2\cdots \cup H_k (m\geq 3, k\geq 2):$ We first construct $C_3\cup H_1$ on $G_{3+|H_1|}$ and this case is reduced to the case that contains at least two $C_3$ and one $H_i$ of cycle length at least 4. The rest can be done by the previous discussion. This completes the proof.
  \end{itemize}
\end{itemize}
\end{proof}

Remark that the condition $n\geq 7$ in the above theorem is tight as there does not exist two vertex-disjoint $C_3$ in $G_6$.

\section{Hamilton cycles generated by two primes}

We start with an interesting observation: does $G_n$ contain a Hamilton cycle whose edges are all obtained by differences $2$ or $3$. For example, there exists such a Hamilton cycle $(1,4,2,5,3)$ in $G_5$. However, it does not for some particular cases, especially when $n$ is small. In fact, we find that no such Hamilton cycles exist for $n\in [1,9]\setminus\{5\}$. On the other hand, we prove that for each $n\geq10$, such a Hamilton cycle does exist.

\begin{la}\label{Hpath23}
For each $n\geq6$, there exists a Hamilton path such that all edges are of differences either $2$ or $3$.
\end{la}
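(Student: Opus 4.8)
The plan is to prove the statement by a recursion that adds five consecutive vertices at a time, while maintaining the invariant that the Hamilton path \emph{terminates at its largest vertex}. Concretely, I would first establish the following recursive step: if $G_{1,n}$ has a Hamilton path $P$ using only differences $2$ and $3$ and ending at the vertex $n$, then $G_{1,n+5}$ has such a path ending at $n+5$. To see this, note that by the shift property (Proposition~\ref{sym}(2)) the fixed path $\langle 3,1,4,2,5\rangle$ of $G_{1,5}$ transports to the path $\langle n+3,\,n+1,\,n+4,\,n+2,\,n+5\rangle$ of $G_{n+1,n+5}$, whose consecutive differences are all $2$ or $3$. Since $P$ ends at $n$ and this block begins at $n+3$, the gluing edge $\{n,n+3\}$ has difference $3$; hence $\langle P,\,n+3,n+1,n+4,n+2,n+5\rangle$ is a Hamilton path of $G_{1,n+5}$ with all differences in $\{2,3\}$, and it again ends at the top vertex, so the invariant is preserved.

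Next I would supply the base cases $n=6,7,8,9,10$, each as an explicit Hamilton path of $G_{1,n}$ whose differences lie in $\{2,3\}$ and whose final vertex is $n$; for instance $\langle 1,4,2,5,3,6\rangle$ for $n=6$ and $\langle 9,7,5,2,4,1,3,6,8,10\rangle$ for $n=10$, with three similar short listings for $n=7,8,9$. Since every integer $n\ge 6$ can be written as $n_0+5k$ with $n_0\in\{6,7,8,9,10\}$ and $k\ge 0$, applying the recursive step $k$ times to the appropriate base case yields the desired Hamilton path of $G_{1,n}=G_n$, completing the induction.

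The only real design decision — and the step that must be gotten right — is the choice of invariant. The difficulty is that an arbitrary Hamilton path of $G_{1,n}$ cannot in general be extended, because the five new vertices $n+1,\dots,n+5$ can only reach back into $[1,n]$ through differences $2$ or $3$, i.e. only via $\{n-2,\dots,n\}$; forcing the path to end exactly at $n$ guarantees a single clean difference-$3$ gluing edge to the start $n+3$ of the shifted block, and arranging that block to end at $n+5$ propagates the invariant. Once this is set up, verifying the five base cases is a routine finite check and no further case analysis on the residue of $n$ is needed beyond selecting $n_0$.
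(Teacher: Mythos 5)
Your proposal is correct, but it takes a genuinely different route from the paper. Your recursion is sound: the shifted block $\langle n+3,n+1,n+4,n+2,n+5\rangle$ has differences $2,3,2,3$, the gluing edge $\{n,n+3\}$ has difference $3$, and the invariant ``ends at the maximum vertex'' is preserved; the three base cases you leave unwritten do exist (e.g.\ $\langle 6,3,1,4,2,5,7\rangle$, $\langle 7,5,2,4,1,3,6,8\rangle$, $\langle 8,6,3,1,4,2,5,7,9\rangle$), so the induction goes through. The paper, by contrast, dispenses with induction entirely and writes down a single closed-form path split by parity: for even $n$ the path $\langle n,n-2,\dots,8,6,3,1,4,2,5,7,\dots,n-3,n-1\rangle$ (descend the evens, cross over near $1$, ascend the odds), and for odd $n$ the analogous $\langle n,n-2,\dots,7,5,2,4,1,3,6,8,\dots,n-3,n-1\rangle$; verification is a one-line check. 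Beyond brevity, the paper's construction buys something your recursion cannot: it pins down \emph{both} endpoints, joining $n$ to $n-1$. This extra feature is silently used in the very next proposition (the Hamilton cycle of differences $2$ and $3$ for $n\geq 10$), whose proof needs a Hamilton path on $[1,\lfloor n/2\rfloor+1]$ between its two largest elements and, via the complement property, a path on $[\lfloor n/2\rfloor,n]$ between its two smallest. Your construction controls only the top endpoint: the other endpoint stays frozen at the start vertex of whichever base case you used, and since your recursion only ever extends the top end, for $n\geq 11$ the two endpoints can never be $n$ and $n-1$. So your argument proves the lemma exactly as stated, but the paper's proof establishes a strictly stronger fact that the rest of Section 4 actually relies on; if you wanted your approach to serve that purpose, you would need a recursion that inserts the new block somewhere other than at a single growing end.
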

\begin{proof}
    If $n$ is even, then $\langle n, n-2, \dots,8,6,3,1,4,2,5,7,\dots,n-3,n-1\rangle$ is the desired path. Otherwise, $n$ is odd, $\langle n, n-2, \dots,7,5,2,4,1,3,6,8,\dots,n-3,n-1\rangle$ is the desired path.
\end{proof}

\begin{pro}
For each $n\geq10$, $G_n$ contains a Hamilton cycle whose edges are of prime differences $2$ or $3$.
\end{pro}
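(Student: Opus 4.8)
The plan is to reduce the construction of the cycle to the Hamilton path already supplied by Lemma \ref{Hpath23}, by attaching a small fixed gadget on the four smallest vertices. The obstruction to simply closing the path of Lemma \ref{Hpath23} into a cycle is that the construction there produces a path whose two end-vertices are $n$ and $n-1$, which differ by $1$; since $1$ is not prime, no cycle can be obtained by adding a single edge. Instead I would aim for a Hamilton path $Q$ of the sub-interval $[5,n]$ that runs \emph{from $5$ to $6$} and uses only differences $2$ and $3$. Granting such a $Q$, the sequence
\[
\langle 3,\,1,\,4,\,2,\,Q\rangle
\]
(that is, $3,1,4,2$ followed by $Q$, which begins at $5$ and ends at $6$) together with the closing edge $\{6,3\}$ is a Hamilton cycle of $G_n$. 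The only new adjacencies introduced are $\{3,1\}$, $\{1,4\}$, $\{4,2\}$, the junction $\{2,5\}$, and the closing edge $\{6,3\}$, whose differences are $2,3,2,3,3$ respectively; hence every edge of the resulting cycle has prime difference $2$ or $3$, as required.

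It remains to produce $Q$, and here I would invoke the symmetry properties of Proposition \ref{sym}. The path of Lemma \ref{Hpath23} has differences in $\{2,3\}$ and end-vertices equal to the two \emph{largest} integers of its interval. Applying the complement property, Proposition \ref{sym}(1), converts this into a path with the same differences whose end-vertices are the two \emph{smallest} integers, i.e. a Hamilton path of $[1,m]$ from $1$ to $2$; this is valid whenever $m\geq 6$ (the complement map $v\mapsto m+1-v$ preserves absolute differences, so the property of using only differences $2$ and $3$ is retained). Applying the shift property, Proposition \ref{sym}(2), with shift $k=4$ and $m=n-4$ then moves the interval to $[5,n]$ and the end-vertices to $5$ and $6$, yielding $Q$. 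The hypothesis needed is exactly $m=n-4\geq 6$, that is $n\geq 10$, which matches the statement.

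The conceptual crux, rather than any lengthy computation, is recognizing that the end-vertices $n,n-1$ of the lemma path cannot be joined, and that prepending the fixed block $3,1,4,2$ on the four smallest vertices dissolves the difficulty because both junctions $2\to 5$ and $6\to 3$ happen to have difference $3$. Once the target endpoints $(5,6)$ for $Q$ are identified, the complement-then-shift maneuver is routine, and in particular no separate case analysis on the parity of $n$ is needed, since the parity is already absorbed inside Lemma \ref{Hpath23}. As a sanity check on the smallest case $n=10$: the $[1,6]$ path $\langle 1,4,6,3,5,2\rangle$ (the complement of the $n=6$ path of Lemma \ref{Hpath23}) shifts to $\langle 5,8,10,7,9,6\rangle$, giving the cycle $\langle 3,1,4,2,5,8,10,7,9,6\rangle$ with all differences in $\{2,3\}$, which confirms the construction.
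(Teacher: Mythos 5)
Your proposal is correct, and it reaches the same threshold $n\geq 10$ as the paper, but by a genuinely different decomposition. The paper splits $[1,n]$ into two \emph{overlapping} halves $A=\{1,\dots,\lfloor n/2\rfloor+1\}$ and $B=\{\lfloor n/2\rfloor,\dots,n\}$, each of size at least $6$ once $n\geq 10$; it applies Lemma \ref{Hpath23} directly to $A$ (whose path ends at the two largest elements, i.e.\ the two shared vertices) and the complement of Lemma \ref{Hpath23} to $B$ (whose path ends at the two smallest elements, again the shared vertices), and then glues the two paths at their common endpoints to form the cycle. Your construction instead anchors the fixed block $\langle 3,1,4,2\rangle$ on the four smallest vertices and uses a single complement-then-shift application of Lemma \ref{Hpath23} to produce the path $Q$ on $[5,n]$ from $5$ to $6$, closing with the edge $\{6,3\}$; the requirement $n-4\geq 6$ gives the same bound. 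Each route has a small advantage: the paper's symmetric gluing introduces no ad hoc edges at all (every edge of the cycle lies inside one of the two lemma paths), though it silently relies on the fact that the union of two paths sharing exactly their two endpoints is a single cycle (true here, since the shared vertices have degree one in each path and the difference-$1$ edge between them is never used); your version invokes the lemma only once and makes every added edge explicit ($\{3,1\},\{1,4\},\{4,2\},\{2,5\},\{6,3\}$ with differences $2,3,2,3,3$), so its correctness is checkable by inspection with no structural argument about unions of paths. Both are complete proofs of the proposition.
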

\begin{proof}
    Since $n\geq10$, both the sets $A=\{1,2,\dots,\lfloor\frac n2\rfloor+1\}$ and $B=\{\lfloor\frac n2\rfloor,\lfloor\frac n2\rfloor+1,\dots,n\}$ are of cardinality at least $6$. Therefore, by Lemma \ref{Hpath23} and the complement property of Proposition \ref{sym}, we have a Hamilton path $\langle \lfloor\frac n2\rfloor+1 \xrightarrow{[1,\lfloor\frac n2\rfloor+1]} \lfloor\frac n2\rfloor\rangle$ defined on $A$ and a Hamilton path $\langle \lfloor\frac n2\rfloor \xrightarrow{[\lfloor\frac n2\rfloor,n]} \lfloor\frac n2\rfloor+1\rangle$ defined on $B$ using differences $2$ and $3$. This implies a Hamilton cycle in $G_n$ using only differences $2$ and $3$.
\end{proof}

Next, we consider a more general situation in using two prime differences.
Adapting the idea of abstract algebra, it is not difficult to see that $G_n$ contains a Hamilton cycle generated by the two primes $p$ and $q$ if $p+q=n$. For example, the Hamilton cycle $(1,3,5,7,9,2,4,6,8)$ in $G_9$ uses $2$ and $7$ as prime differences. Therefore, we have the following proposition and its corollaries.

\begin{pro}
Let $p$ and $q$ be two distinct primes such the $p+q=n$. Then, $G_n$ contains a Hamilton cycle with only prime differences $p$ and $q$.
\end{pro}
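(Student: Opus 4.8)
The plan is to exploit the algebraic structure of $\mathbb{Z}_n$, the integers modulo $n = p+q$, exactly as the phrase ``adapting the idea of abstract algebra'' suggests and as the worked example $(1,3,5,7,9,2,4,6,8)$ in $G_9$ (with $p=2$, $q=7$) illustrates. The crucial observation is that $p+q=n$ forces $p\equiv -q\pmod{n}$, so in $\mathbb{Z}_n$ a step of $+p$ and a step of $-q$ become indistinguishable. I would therefore build the cycle by repeatedly adding $p$ in $\mathbb{Z}_n$ and then read off the resulting residues as vertices of $G_n$.

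First I would verify that $p$ generates the additive group $\mathbb{Z}_n$. Since $p$ is prime, $\gcd(p,n)=\gcd(p,p+q)=\gcd(p,q)$, and because $p$ and $q$ are \emph{distinct} primes this equals $1$. Hence the orbit $0,\,p,\,2p,\,\dots,\,(n-1)p \pmod{n}$ runs through every residue class exactly once and returns to $0$, producing a single cyclic arrangement of all of $\mathbb{Z}_n$ (and not a disjoint union of shorter cycles). I would then transport this arrangement to the vertex set $\{1,2,\dots,n\}$ by identifying each residue $r\in\{1,\dots,n-1\}$ with vertex $r$ and the residue $0$ with vertex $n$.

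The remaining and only computational step is to confirm that every edge of the resulting cycle has difference $p$ or $q$. Consecutive terms are $kp \bmod n$ and $(k+1)p \bmod n$; viewed as integers in $\{0,\dots,n-1\}$ their difference is $+p$ when no reduction occurs and $p-n=-q$ when exactly one copy of $n$ is subtracted. Since $0<p,q<n$, adding $p$ to an element of $\{0,\dots,n-1\}$ can cross the modulus at most once, so no other case arises. Thus every consecutive pair differs in absolute value by either $p$ or $q$, both prime, and the arrangement is the desired Hamilton cycle in $G_n$.

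I expect no serious obstacle: the construction is essentially forced once one notes $p\equiv -q\pmod n$, and the single substantive point is the coprimality $\gcd(p,q)=1$, which is what upgrades ``$p$ is a generator'' to ``a single orbit is a full Hamilton cycle.'' The only care required is the bookkeeping that reducing $kp$ modulo $n$ shifts the true integer difference by exactly one $n$, converting the nominal step $+p$ into $-q$; this follows immediately from $0<p,q<n$.
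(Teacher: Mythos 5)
Your proposal is correct and follows essentially the same route as the paper: both arguments use that $\gcd(p,n)=1$ makes $p$ a generator of $(\mathbb{Z}_n,+)$, so the orbit $p,2p,3p,\dots \pmod n$ visits every vertex once, and consecutive terms differ as integers by $+p$ or $p-n=-q$, both prime in absolute value. In fact your write-up is more careful than the paper's, which asserts the coprimality and the difference bookkeeping without justification, whereas you supply both (via $\gcd(p,p+q)=\gcd(p,q)=1$ and the observation that adding $p$ crosses the modulus at most once).
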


\begin{proof}
Notice that $p$ and $n$ are relatively prime, as well as $q$ and $n$. A fundamental result in abstract algebra asserts that $p$ and $q$ are generators of the group $(\mathbb{Z}_n,+)$ with the ordinary addition as its operation. This implies the subgroup generated by $p$ is cyclic. 
Therefore, one can conclude that the cyclic ordering $(p,2p,3p,\dots)$ generated by $p$ corresponds to the vertex ordering of the desired Hamilton cycle in $G_n$ since the difference of any two adjacent vertices, congruent to $p$ modulo $n$, is either $p$ or $q$. 
\end{proof}

\begin{cor}
If $(p, p+2)$ is a twin prime pair, then $G_{p+2}$ contains a Hamilton cycle generated by the differences $2$ and $p$.
\end{cor}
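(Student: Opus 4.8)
The plan is to obtain this statement as an immediate specialization of the preceding proposition, which guarantees a Hamilton cycle of $G_n$ using only the differences $p'$ and $q'$ whenever $p'$ and $q'$ are distinct primes with $p'+q'=n$. First I would take the two primes to be $p'=2$ and $q'=p$. Their sum is $2+p=p+2$, so setting $n=p+2$ matches the order of the graph $G_{p+2}$ appearing in the statement, and the two prescribed differences are then exactly $2$ and $p$, as required.

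The only hypothesis of the proposition that needs verifying is that the two primes are distinct, i.e. that $p\neq 2$. This follows from the assumption that $(p,p+2)$ is a twin prime pair: both $p$ and $p+2$ must be prime, and were $p=2$ we would have $p+2=4$, which is not prime. Hence $p\geq 3$, so $p$ is odd and $2\neq p$. With distinctness established, the proposition applies directly and yields the desired Hamilton cycle in $G_{p+2}$, all of whose edges have difference $2$ or $p$.

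There is essentially no obstacle to overcome here, since the entire content is carried by the proposition; the proof amounts to checking that its hypotheses hold for the pair $\{2,p\}$. If a more self-contained argument were preferred, I would instead exhibit the cycle explicitly by following the construction in the proposition: because $\gcd(p,p+2)=1$, the multiples $p,2p,3p,\dots$ taken modulo $n=p+2$ run through every residue and hence give a cyclic ordering of all of $\{1,2,\dots,n\}$, while any two consecutive vertices differ by $p$ (when no wrap-around occurs) or by $|p-n|=2$ (when it does). Thus every edge of the resulting Hamilton cycle has difference $2$ or $p$, which completes the argument.
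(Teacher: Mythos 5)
Your proposal is correct and matches the paper's intent exactly: the paper states this corollary without proof, as an immediate specialization of the preceding proposition to the primes $2$ and $p$ with $n=p+2$. Your verification that the twin prime hypothesis forces $p\neq 2$ (so the two primes are distinct) is precisely the check needed, and your optional explicit construction simply re-runs the proposition's own argument.
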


Since the edges with distinct differences in $G_n$ are different edges, the following result is clear.

\begin{cor}\label{2primes}
If $n=p_1+q_1=p_2+q_2$ where $p_1,p_2,q_1$ and $q_2$ are distinct primes, then $G_n$ contains two edge-disjoint Hamilton cycles.
\end{cor}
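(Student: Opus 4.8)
The plan is to invoke the preceding proposition twice and then deduce edge-disjointness from the distinctness of the four primes. Concretely, applying that proposition to the pair $(p_1,q_1)$---which is legitimate because $p_1\neq q_1$ follows from the hypothesis that all four primes are distinct, and because $p_1+q_1=n$---yields a Hamilton cycle $C_1$ of $G_n$, every one of whose edges has difference $p_1$ or $q_1$. Applying it again to $(p_2,q_2)$ yields a second Hamilton cycle $C_2$, every edge of which has difference $p_2$ or $q_2$.

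The only remaining point is that $C_1$ and $C_2$ share no edge. First I would observe that in $G_n$ each edge $\{u,v\}$ carries a well-defined invariant, namely its difference $|u-v|$; consequently two edges with distinct differences are necessarily distinct edges. Since $p_1,q_1,p_2,q_2$ are pairwise distinct, the difference-sets $\{p_1,q_1\}$ and $\{p_2,q_2\}$ are disjoint. Every edge of $C_1$ has its difference in $\{p_1,q_1\}$, while every edge of $C_2$ has its difference in $\{p_2,q_2\}$, so no edge of $C_1$ can coincide with an edge of $C_2$. Hence $C_1$ and $C_2$ are edge-disjoint Hamilton cycles, as required.

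I do not expect any genuine obstacle here: the statement is essentially a bookkeeping consequence of the preceding proposition together with the observation that an edge is distinguished by its prime difference. The one detail worth double-checking is that each cycle indeed draws its edges only from the prescribed difference-set (a cycle might in principle use only one of the two differences, but that still keeps every edge inside $\{p_i,q_i\}$), and this is already guaranteed by the explicit construction in the proposition.
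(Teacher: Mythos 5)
Your proposal is correct and follows exactly the paper's route: the paper derives this corollary by applying the preceding proposition to each pair $(p_1,q_1)$ and $(p_2,q_2)$ and then noting, as you do, that ``edges with distinct differences in $G_n$ are different edges,'' so the disjointness of $\{p_1,q_1\}$ and $\{p_2,q_2\}$ forces the two Hamilton cycles to be edge-disjoint. Your write-up merely spells out in full what the paper compresses into a one-line observation before the corollary statement.
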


The above discussion provides clues about existence of many edge-disjoint Hamilton cycles in the prime difference graphs. For example, $20=3+17=7+13$, so $G_{20}$ contains two edge-disjoint Hamilton cycles generated by $3$ and $17$, and $7$ and $13$, respectively. Here is another example; $30=7+23=11+19=13+17$ implies that $G_{30}$ contains at least three edge-disjoint Hamilton cycles generated by the corresponding prime pairs. With the one of differences $2$ and $3$, there are at least four edge-disjoint Hamilton cycles in $G_{30}$.

Recent developments in number theory have provided further evidence on this direction. According to the Green–Tao theorem, there exist arbitrarily long sequences of primes in arithmetic progression.
\begin{thm}[Green and Tao \cite{GreenTao2008}]\label{gt} For any given $k\geq1$, there exists an arithmetic progression of primes $p_1,p_2,\dots,p_k$.
\end{thm}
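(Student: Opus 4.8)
The plan is to deduce the statement from \emph{Szemer\'edi's theorem}, which asserts that for every $\delta>0$ and every $k$, any subset of $\{1,2,\dots,N\}$ of size at least $\delta N$ contains a $k$-term arithmetic progression once $N$ is large in terms of $\delta$ and $k$. The obstruction is that the primes have density $\sim 1/\log N$, which tends to $0$, so Szemer\'edi's theorem cannot be applied to them directly. The central idea is therefore a \emph{transference principle}: instead of demanding positive density inside the integers, I would show that a set having positive relative density inside a sufficiently \emph{pseudorandom} ambient set still inherits the conclusion of Szemer\'edi's theorem. The primes do enjoy positive relative density inside an appropriate sieve-theoretic majorant, and this is what makes the reduction feasible.

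First I would apply the \emph{$W$-trick}: setting $W=\prod_{p\leq w}p$ for a slowly growing parameter $w$ and fixing a residue $b$ coprime to $W$, I restrict attention to primes of the form $Wn+b$, which removes the systematic bias of the primes in small residue classes and keeps the normalized count bounded below. I then pass to the cyclic group $\mathbb{Z}_N$ for a large prime $N$ (to avoid wrap-around effects) and replace the indicator of these primes by a nonnegative weighted variant $f$, built from a truncated von Mangoldt function, with $\mathbb{E}\,f$ bounded away from $0$ and with $f$ pointwise dominated by a measure $\nu\colon\mathbb{Z}_N\to\mathbb{R}_{\geq0}$ satisfying $\mathbb{E}\,\nu=1+o(1)$. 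The measure $\nu$ is the crux: it should be a Goldston--Yildirim type enveloping sieve, and it must be verified to be pseudorandom in two precise senses, namely the \emph{linear forms condition} (correlations of $\nu$ along bounded systems of affine-linear forms factor as though $\nu$ were the constant measure) and the weaker \emph{correlation condition} on self-correlations of its shifts.

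Granting the pseudorandomness of $\nu$, I would prove a \emph{relative Szemer\'edi theorem}: any $f$ with $0\leq f\leq\nu$ and $\mathbb{E}\,f\geq\delta$ carries a positive density of $k$-term progressions. Two ingredients drive this. The \emph{generalized von Neumann theorem} shows that the weighted count of $k$-term progressions in $f$ is controlled, up to $o(1)$, by the Gowers uniformity norm $\|f\|_{U^{k-1}}$, and, crucially, this bound survives the fact that $f$ is majorized only by $\nu$ rather than by $1$, which is exactly where the linear forms condition is consumed. The \emph{Koopman--von Neumann structure theorem} then splits $f=f_{\mathrm{str}}+f_{\mathrm{unf}}$, where $f_{\mathrm{str}}$ is bounded by an absolute constant (so that ordinary Szemer\'edi applies to it and already yields many progressions) and $f_{\mathrm{unf}}$ has negligible $U^{k-1}$ norm (so by the von Neumann theorem it contributes negligibly). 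Combining the two gives the positive progression count for $f$, and unwinding the $W$-trick converts this into arbitrarily long arithmetic progressions of genuine primes for every fixed $k$.

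The main obstacle is unquestionably the construction of $\nu$ and the verification of its pseudorandomness: establishing the linear forms and correlation conditions requires the Goldston--Yildirim sieve asymptotics for correlations of the von Mangoldt weight along a bounded number of linear forms, together with uniform control of the resulting error terms and singular series. A second, substantial difficulty is the structure theorem, whose proof runs through an energy-increment/dichotomy argument that repeatedly refines $f_{\mathrm{str}}$ by conditioning on dual functions of $\nu$; making this argument quantitative and compatible with $\nu$-boundedness, rather than the ordinary $1$-boundedness assumed in Szemer\'edi's theorem, is the genuine technical heart of the transference method.
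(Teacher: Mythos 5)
This statement is the Green--Tao theorem, and the paper does not prove it: it is quoted as a labelled theorem with a citation to Green and Tao (2008) and used purely as a black box to manufacture many representations $n=p_i+p_{2t-i+1}$ and hence edge-disjoint Hamilton cycles. So there is no in-paper argument to compare yours against; the relevant benchmark is the published proof itself. Measured against that, your outline is a faithful reproduction of its architecture: the $W$-trick to remove small-prime biases, the passage to $\mathbb{Z}_N$ with a truncated von Mangoldt weight $f$ dominated by a Goldston--Yildirim majorant $\nu$, the linear forms and correlation conditions, the generalized von Neumann theorem controlling the progression count by a Gowers $U^{k-1}$ norm, and the Koopman--von Neumann-type decomposition $f=f_{\mathrm{str}}+f_{\mathrm{unf}}$ feeding the bounded part into ordinary Szemer\'edi. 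You also correctly identify where the real work lies (the sieve correlation estimates for $\nu$ and the $\nu$-bounded energy-increment structure theorem). That said, what you have written is a roadmap, not a proof: the two components you flag as obstacles are exactly the ones occupying the bulk of Green and Tao's paper, and nothing in the sketch discharges them. For the purposes of this paper, the authors' choice --- citing the theorem rather than reproving it --- is the appropriate one, and your proposal should be read as an accurate summary of the cited work rather than as an independent verification of the statement.
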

This, together with Corollary \ref{2primes}, implies the following result.
\begin{cor}
For any $t\in\mathbb{N}$, there exists a number $n(t)$ such that $G_{n(t)}$ contains $t$ edge-disjoint Hamilton cycles.
\end{cor}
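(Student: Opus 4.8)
The plan is to manufacture a single order $n$ that admits $t$ distinct representations as a sum of two primes, and then to invoke the Proposition on Hamilton cycles generated by two primes once for each representation. The bridge to number theory is the Green--Tao theorem (Theorem \ref{gt}): given $t$, apply it with $k=2t$ to obtain an arithmetic progression of primes $p_1<p_2<\dots<p_{2t}$, say $p_i=p_1+(i-1)d$ with common difference $d>0$. The crucial feature of an arithmetic progression is that the sum $p_i+p_j$ depends only on $i+j$, so pairing terms symmetrically about the middle produces many pairs sharing a common sum.

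Concretely, I would set $n(t):=p_1+p_{2t}$ and form the $t$ pairs $(p_i,p_{2t+1-i})$ for $i=1,\dots,t$. Each such pair sums to
\[p_i+p_{2t+1-i}=\bigl(p_1+(i-1)d\bigr)+\bigl(p_1+(2t-i)d\bigr)=2p_1+(2t-1)d=p_1+p_{2t}=n(t),\]
so all $t$ pairs are prime-pair decompositions of the same integer $n(t)$. Since the $2t$ primes are pairwise distinct and, for $i\neq j$ in $\{1,\dots,t\}$, one has $i+j\le 2t-1<2t+1$ (forcing the four indices $i,\,2t+1-i,\,j,\,2t+1-j$ to be distinct), the $t$ two-element difference sets $\{p_i,p_{2t+1-i}\}$ are pairwise disjoint; the middle pair $(p_t,p_{t+1})$ is legitimate because $2i\neq 2t+1$ guarantees $p_i\neq p_{2t+1-i}$.

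With the decompositions in hand, the Proposition on Hamilton cycles generated by two primes supplies, for each $i$, a Hamilton cycle $C_i$ in $G_{n(t)}$ all of whose edges have difference $p_i$ or $p_{2t+1-i}$. The final step is to observe edge-disjointness, exactly as in the remark preceding Corollary \ref{2primes}: an edge of $G_{n(t)}$ is partly determined by its difference, and the difference sets of distinct cycles are disjoint, so no edge is shared between $C_i$ and $C_j$ for $i\neq j$. Hence $C_1,\dots,C_t$ are $t$ edge-disjoint Hamilton cycles in $G_{n(t)}$.

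I do not anticipate a genuine obstacle; the essential content is entirely supplied by Green--Tao, and the argument is a clean generalization of Corollary \ref{2primes} from two representations to $t$. The only points requiring care are bookkeeping ones: verifying that the symmetric pairing yields exactly $t$ distinct decompositions using each of the $2t$ primes once, and noting that the resulting $n(t)$ is automatically large enough for the Proposition to apply, since the primes produced by Green--Tao may be taken arbitrarily large.
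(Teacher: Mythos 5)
Your proposal is correct and follows essentially the same route as the paper: apply Green--Tao with $k=2t$ and pair the primes symmetrically as $p_i+p_{2t+1-i}$ to get $t$ prime-pair decompositions of a common $n$, then get edge-disjoint Hamilton cycles from the distinct difference pairs. Your write-up merely spells out the bookkeeping (constant sums, disjointness of the pairs, edge-disjointness from distinct differences) that the paper's terse proof leaves implicit.
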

\begin{proof}
Given $t\in\mathbb{N}$, let $k=2t$ and $n=p_i+p_{2t-i+1}$ for $i=1,2,\dots,t$, where $p_i$'s are primes obtained as in Theorem \ref{gt}. By Corollary \ref{2primes}, $G_n$ contains at least $t$ edge-disjoint Hamilton cycles.\end{proof}

\noindent{\bf Remark.} Among previous results, several constructions of Hamilton cycles in the prime difference graphs are provided. We find that only a few number of primes are sufficient to construct a single Hamilton cycle. According to the prime number theorem, the asymptotic distribution of the prime numbers among the positive integers in $[n]$ is close to $\frac{n}{\log n}$, which implies the degree of vertices in $G_n$ is $O(\frac{n}{\log n})$. This means that it is likely to find many edge-disjoint Hamilton cycles if one can exploit the primes properly to form  Hamilton cycles. We believe that the number of edge-disjoint Hamilton cycles in $G_n$ is getting larger as $n$ tends to infinity, and wonder if this number could be asymptotically close to $\frac{n}{\log n}$. It would be very interesting if one can provide an asymptotic bound on the number of edge-disjoint Hamilton cycles in $G_n$.

\section*{Acknowledgments}
The authors would like to thank Professor Z. W. Sun for valuable suggestions.

\end{document}